\documentclass[11 pt]{amsart}
\pagestyle{plain}
\usepackage[english]{babel}
\usepackage{amsthm}
\usepackage{graphicx}
\usepackage{amsfonts}
\usepackage{eufrak}
\usepackage{amsopn}
\usepackage{amsmath, amsfonts, amssymb, amscd, amsthm}
\usepackage{yhmath}
\usepackage{setspace}
\usepackage{mathtools}
\usepackage[all]{xy}
\newtheorem{theorem}{Theorem}[section]

\newtheorem{lemma}[theorem]{Lemma}

\theoremstyle{definition}

\theoremstyle{remark}

\theoremstyle{example}

\theoremstyle{note}

\numberwithin{equation}{section}

\DeclareMathOperator{\Sp}{Sp}

\begin{document}
\title{Some remarks on the symplectic group $\Sp(2g,\mathbb{Z})$.}
\author{Kumar Balasubramanian; Ganesh Ji Omar}
\address{Kumar Balasubramanian, Department of Mathematics,
Indian Institute of Science Education and Research Bhopal,
Bhopal, Madhya Pradesh, India.}
\email{bkumar@iiserb.ac.in}
\address{Ganesh Ji Omar, BS-MS (2011-2016) batch,
Indian Institute of Science Education and Research Bhopal,
Bhopal, Madhya Pradesh, India.}
\email{ganesh@iiserb.ac.in}
\keywords{Symplectic group, Euler's $\phi$ function}
\thispagestyle{empty}

\maketitle
\vspace{0.3 cm}
\section*{Abstract}

Let $G=\Sp(2g,\mathbb{Z})$ be the symplectic group over the integers. Given $m\in \mathbb{N}$, it is natural to ask if there exists a non-trivial matrix $A\in G$ such that $A^{m}=I$, where $I$ is the identity matrix in $G$. In this paper, we determine the possible values of $m\in \mathbb{N}$ for which the above problem has a solution. We also show that there is an upper bound on the maximal order of an element in $G$. As an illustration, we apply our results to the group $\Sp(4,\mathbb{Z})$ and determine the possible orders of elements in it. Finally, we use a presentation of $\Sp(4,\mathbb{Z})$ to identify some finite order elements and do explicit computations using the presentation to verify their orders.

\section{Introduction}

Given a group $G$ and a positive integer $m\in N$, it is natural to ask if there exists $k\neq e\in G$ such that $k^{m}=e$ where $e$ is the identity element in $G$. In this paper, we address this question in the case of the symplectic group $\Sp(2g, \mathbb{Z})$. \\

One of the principal reasons for focussing on the group $\Sp(2g, \mathbb{Z})$ is the following. It plays an important role in geometry and number theory and comes up in many interesting situations. For example, in geometry it plays a significant role in the study of certain type of surfaces due to its connections with the mapping class group. It also comes up in number theory in the study of Siegel modular forms. \\

Before we state the main results, we first recall the definition of $\Sp(2g,\mathbb{Z})$ and fix some notation. \\

The group $\Sp(2g,\mathbb{Z})$ is the group of all $2g \times 2g$ matrices with integral entries satisfying
\[A^{\top}JA=I\]
where $A^{\top}$ is the transpose of the matrix $A$ and $J=\begin{pmatrix}0_{g} & I_{g} \\ -I_{g} & 0_{g}\end{pmatrix}$.\\

Throughout we write $m=p_{1}^{\alpha_{1}}\dots p_{k}^{\alpha_{k}}$, where $p_{i}$ is a prime and $\alpha_{i}>0$ for all $i\in \{1,2, \ldots, k\}$. We also assume that the primes $p_{i}$ are such that $p_{i}<p_{i+1}$ for $1\leq i < k+1$. Also for $A\in G$ we let $o(A)$ denote the order of $A$.\\

We now state the main results of this paper.

\begin{theorem}\label{main1} Let $\mathcal{A}=\{m\in \mathbb{N} \mid p_{i}\leq 2g+1 \text{ for some } i\}$. For $A\in G$, we have $A^{m}=I$ if and only if $m\in \mathcal{A}$.
\end{theorem}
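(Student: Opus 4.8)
The plan is to reduce the statement to a question about elements of prime order. First I would observe that there exists a non-trivial $A \in G$ with $A^{m}=I$ if and only if $G$ contains an element of prime order $p$ for some prime $p$ dividing $m$: if $A \neq I$ and $A^{m}=I$ then $o(A)=d>1$ divides $m$, and for any prime $p \mid d$ the element $A^{d/p}$ has order exactly $p$; conversely an element of order $p \mid m$ already satisfies $A^{m}=I$. Since by definition $m \in \mathcal{A}$ holds exactly when $m$ has a prime factor $p_{i}\le 2g+1$, the theorem becomes equivalent to the assertion: \emph{$G$ contains an element of order $p$ if and only if $p \le 2g+1$.}

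For necessity, suppose $A \in G$ has order $p$. Viewing $A \in \GL(2g,\mathbb{Z})\subset \GL(2g,\mathbb{Q})$, its minimal polynomial divides $x^{p}-1=(x-1)\Phi_{p}(x)$, where $\Phi_{p}$ is the $p$-th cyclotomic polynomial. As $A \neq I$, its minimal polynomial is not $x-1$, so the irreducible factor $\Phi_{p}$ (of degree $p-1$) divides the characteristic polynomial of $A$. Comparing degrees gives $p-1 \le 2g$, i.e. $p \le 2g+1$. Taking contrapositives, if $m \notin \mathcal{A}$ then every prime factor of $m$ exceeds $2g+1$, so $G$ has no element of order $p$ for any prime $p \mid m$, hence no non-trivial $A$ with $A^{m}=I$.

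For sufficiency I would construct, for each prime $p \le 2g+1$, an explicit element of order $p$. The case $p=2$ is handled by $-I \in G$. For odd $p$ set $h=(p-1)/2 \le g$ and work in $K=\mathbb{Q}(\zeta_{p})$ with ring of integers $\mathbb{Z}[\zeta_{p}]\cong \mathbb{Z}^{p-1}$; multiplication by $\zeta_{p}$ is an order-$p$ automorphism of this lattice. To make it symplectic I would equip $K$ with the pairing $\langle x,y\rangle=\mathrm{Tr}_{K/\mathbb{Q}}(\alpha x \bar y)$, where $\bar{\cdot}$ is complex conjugation $\zeta_{p}\mapsto \zeta_{p}^{-1}$ and $\alpha$ is purely imaginary ($\bar\alpha=-\alpha$); such a form is alternating and invariant under multiplication by $\zeta_{p}$, since $\zeta_{p}\bar\zeta_{p}=1$. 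The technical point is to choose $\alpha$ so the form is \emph{unimodular} on $\mathbb{Z}[\zeta_{p}]$: using that the inverse different of $K$ is the principal ideal $(1-\zeta_{p})^{2-p}$ and that $\gamma=\zeta_{p}^{(p+1)/2}-\zeta_{p}^{-(p+1)/2}$ is a purely imaginary generator of $(1-\zeta_{p})$, one checks that $\alpha=\gamma^{\,2-p}$ is purely imaginary and generates the inverse different, so the pairing is unimodular. Since any unimodular alternating $\mathbb{Z}$-lattice of rank $p-1$ is isometric to the standard symplectic lattice, multiplication by $\zeta_{p}$ becomes an element of $\Sp(p-1,\mathbb{Z})=\Sp(2h,\mathbb{Z})$ of order $p$; embedding $\Sp(2h,\mathbb{Z})\hookrightarrow \Sp(2g,\mathbb{Z})$ as the block acting on a symplectic subspace and as the identity on its complement yields the desired element of $G$.

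Combining the directions finishes the proof: if $m \in \mathcal{A}$, pick a prime factor $p \le 2g+1$ and the order-$p$ element $A$ above satisfies $A^{m}=I$ with $A \neq I$, while if $m \notin \mathcal{A}$ no such $A$ exists. The main obstacle is the sufficiency direction, and specifically the verification of unimodularity of the constructed form: the necessity is a short minimal-polynomial degree count, whereas producing a genuinely \emph{integral} symplectic matrix of order $p$ requires the computation with the inverse different sketched above.
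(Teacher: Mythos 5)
Your proof is correct, but it takes a genuinely different route from the paper. The paper deduces both directions from B\"urgisser's criterion (Theorem~\ref{burgisser}), quoted as a black box: for the ``if'' direction, a prime $p_{i}\leq 2g+1$ satisfies $\phi(p_{i})=p_{i}-1\leq 2g$, so B\"urgisser's theorem supplies an element of order $p_{i}$, which then satisfies $A^{m}=I$; for the ``only if'' direction, Lemma~\ref{possible primes} (again a consequence of B\"urgisser's theorem, since $\phi(p^{\alpha})>2g$ when $p>2g+1$) shows every prime dividing $o(A)$ is at most $2g+1$, and divisibility transfers this to $m$. You make the same opening reduction to elements of prime order, but then you replace both invocations of B\"urgisser with direct arguments: necessity via the degree count $\deg \Phi_{p}=p-1\leq 2g$ on the characteristic polynomial, and sufficiency via an explicit construction --- multiplication by $\zeta_{p}$ on $\mathbb{Z}[\zeta_{p}]$ made symplectic by the form $\mathrm{Tr}_{K/\mathbb{Q}}(\alpha x\bar{y})$ with $\alpha$ a purely imaginary generator of the inverse different, followed by the block embedding $\Sp(p-1,\mathbb{Z})\hookrightarrow \Sp(2g,\mathbb{Z})$. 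The technical details check out: $\gamma=\zeta_{p}^{(p+1)/2}-\zeta_{p}^{-(p+1)/2}$ is a purely imaginary unit multiple of $1-\zeta_{p}$, so $\alpha=\gamma^{2-p}$ is purely imaginary (as $p$ is odd) and generates $\mathfrak{d}^{-1}=(1-\zeta_{p})^{2-p}$, which makes the form integral, alternating, unimodular and $\zeta_{p}$-invariant, and the classification of unimodular alternating $\mathbb{Z}$-lattices then yields a symplectic basis. What the paper's route buys is brevity, at the cost of resting entirely on a cited nontrivial theorem; what yours buys is self-containedness --- you are in effect reproving exactly the special case of B\"urgisser's result that this theorem needs (that $\Sp(2g,\mathbb{Z})$ has elements of order $p$ precisely for primes $p\leq 2g+1$), and you obtain explicit matrices in the bargain.
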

\vspace{0.3 cm}

\begin{theorem}\label{main2} Let $A \in G$ be such that $o(A)=m.$ Then $m\leq max\{30, M\}$ where $M=max\{2(2g)^{\frac{g}{\alpha}}, (2g)^{\frac{(g+1)}{\alpha}} \}$ with $\alpha=\frac{log2}{log3}$.
\end{theorem}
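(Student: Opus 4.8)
The plan is to translate the condition $o(A)=m$ into a statement about cyclotomic polynomials and then solve the resulting extremal problem in elementary number theory. Since $A^{m}=I$, the minimal polynomial of $A$ divides $x^{m}-1$, which is separable over $\mathbb{Q}$; hence $A$ is diagonalizable over $\mathbb{C}$ and all of its eigenvalues are roots of unity. Its characteristic polynomial has integer coefficients and degree $2g$, so it factors as a product $\prod_{j}\Phi_{d_{j}}$ of cyclotomic polynomials (with multiplicity), where $o(A)=\operatorname{lcm}_{j}d_{j}=m$. First I would record the budget inequality coming from the degree: over the distinct $d_{j}$ that occur one has $\sum_{j}\phi(d_{j})\le 2g$, and for each prime power $p_{i}^{\alpha_{i}}\,\|\,m$ there is some $d_{j}$ with $p_{i}^{\alpha_{i}}\mid d_{j}$.

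The arithmetic heart of the argument is the inequality
\[
q\le \phi(q)^{1/\alpha}\qquad(\alpha=\tfrac{\log 2}{\log 3})
\]
for every odd prime power $q=p^{a}$, with equality exactly at $q=3$. I would prove this by taking logarithms, reducing it to $\tfrac{\log 2}{\log 3}\,a\log p\le (a-1)\log p+\log(p-1)$, checking the boundary case $q=3$ (equality), and then a short monotonicity computation in $p$ and in $a$. By multiplicativity of $\phi$ this upgrades to $d\le \phi(d)^{1/\alpha}$ for every odd $d$, and, after separating the power of $2$, to $d\le 2\,\phi(d)^{1/\alpha}$ for all $d$.

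With this in hand I would bound $m$ by pulling out the power of $2$ globally. Writing $m=2^{\beta}m_{\mathrm{odd}}$, each odd prime power dividing $m$ divides the odd part $e_{j}$ of some $d_{j}$, and taking the product gives $m_{\mathrm{odd}}\le \prod_{j}\phi(e_{j})^{1/\alpha}\le\bigl((2g)^{s}\bigr)^{1/\alpha}$, where $s$ is the number of nontrivial odd cyclotomic factors used; since each such factor costs $\phi(e_{j})\ge 2$ we get $s\le g$, whence $m_{\mathrm{odd}}\le (2g)^{g/\alpha}$. A short case analysis on $\beta$ then finishes the bound: $\beta\le 1$ gives $m\le 2\,(2g)^{g/\alpha}$; $\beta=2$ costs one extra factor $\phi(4)=2$, so the factor $4$ is absorbed into an additional $(2g)^{1/\alpha}$ and yields $m\le (2g)^{(g+1)/\alpha}$ once $g\ge 2$; and $\beta\ge 3$ is harmless because then $2^{\beta}\le\phi(2^{\beta})^{1/\alpha}$ behaves like an ordinary factor. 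Taking the maximum over these cases produces $M$, and the finitely many small configurations (in particular small $g$, where the product estimates degenerate) are checked by hand and are all bounded by $30$.

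The step I expect to be the main obstacle is controlling the prime $2$. The per-factor estimate $d\le 2\,\phi(d)^{1/\alpha}$ loses a factor of $2$ for each cyclotomic factor, so applied naively it contributes an unacceptable $2^{s}$; the fix is to extract the full $2$-power $2^{\beta}$ once, globally, using that the factors of $2$ in different $d_{j}$ do not accumulate in the least common multiple. (One must also be careful that the relevant necessary condition is not simply $\sum_{i}\phi(p_{i}^{\alpha_{i}})\le 2g$: a single $\Phi_{d}$ can carry order divisible by several primes, so a factor of $2$ can be obtained ``for free'' by passing from $\Phi_{d}$ to $\Phi_{2d}$ for odd $d$.) Getting this bookkeeping exactly right, together with pinning down which small cases justify the constant $30$ and the precise shape of the two terms of $M$, is where the real care is needed; by contrast the underlying inequality $q\le\phi(q)^{1/\alpha}$ is a clean and self-contained computation.
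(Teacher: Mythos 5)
Your proposal is correct in outline, but it follows a genuinely different route from the paper's. The paper quotes two theorems as black boxes: B\"urgisser's characterization of the orders occurring in $\Sp(2g,\mathbb{Z})$, which supplies the constraint $\sum_i \phi(p_i^{\alpha_i}) \le 2g$ (dropping the prime $2$ when $m \equiv 2 \pmod 4$), and Shapiro's global inequality $\phi(m) > m^{\alpha}$ for $m \notin \{1,2,3,4,6,10,12,18,30\}$; it then combines them with a single AM--GM step, $\sum_i \phi(p_i^{\alpha_i}) \ge k\,\phi(m)^{1/k} > m^{\alpha/k} \ge m^{\alpha/(g+1)}$ (using $k \le g+1$), the two congruence classes producing the two terms of $M$ and the constant $30$ absorbing Shapiro's exceptional set. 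You instead extract the budget inequality $\sum_j \phi(d_j)\le 2g$ directly from the cyclotomic factorization of the characteristic polynomial --- a condition valid for any finite-order element of $\GL(2g,\mathbb{Z})$, so B\"urgisser is never needed --- and you replace Shapiro's theorem by its local building block $q^{\alpha}\le\phi(q)$ for odd prime powers $q$ (with equality at $q=3$), multiplying per-factor bounds and handling the prime $2$ by cases on $\beta$. Your route buys self-containedness, a bound valid in all of $\GL(2g,\mathbb{Z})$ rather than just $\Sp(2g,\mathbb{Z})$, and, since the local inequality has no exceptional set, a version in which $30$ is needed only for small-$g$ degeneracies such as $g=1$ (where the maximal order is $6\le M$ anyway), not for an infinite list of candidate exceptional $m$. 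The paper's route buys brevity and painless bookkeeping: B\"urgisser's condition arrives already organized prime power by prime power, so the lcm/covering issues and the factor-of-$2$ accounting that you correctly flag as the delicate points of your argument simply never arise. Those points are genuinely fillable in your setup --- multiplicativity of $\phi$ bounds the $2$-part and the odd part of a single $d_j$ separately (e.g.\ $2^{\beta}e\le \phi(2^{\beta})^{1/\alpha}\phi(e)^{1/\alpha}$ once $\beta\ge 3$), and the crude estimates $\phi(e_j)\le 2g$ and $s\le g$ already yield the stated shape of $M$ --- so what remains is detail, not a gap.
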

\vspace{0.3 cm}
The paper is organized as follows. In section~\ref{preliminaries}, we recall some important results that we need in the sequel. Section~\ref{main theorem}, contains the proofs of the main results (Theorem~\ref{main1}, Theorem~\ref{main2} ) of this paper. In section~\ref{computations}, we explicitly identify some finite order elements in $G$ when $g=2$.

\section{Preliminaries}\label{preliminaries}

Throughout this section we let $\phi$ denote the Euler's phi function. For the sake of completeness, we recall the definition of $\phi$ and record a few properties we need. For a detailed account of the properties of the Euler's $\phi$ function, we refer the reader to \cite{Burton}.\\

For $n\in \mathbb{N}$, $\phi(n)$ is defined to be the number of positive integers less than or equal to $n$ and relatively prime to $n$. The function $\phi$ is a multiplicative function. i.e., for $m,n\in \mathbb{N}$ which are relatively prime, we have $\phi(mn)=\phi(m)\phi(n)$. Using the fact that every positive integer $n>1$, can be expressed in a unique way as \[n=p_{1}^{\alpha_{1}}p_{2}^{\alpha_{2}}\dots p_{k}^{\alpha_{k}}=\prod_{i=1}^{k}p_{i}^{\alpha_{i}}\]
where $p_{1} < p_{2} < \dots < p_{k}$ are primes and $\alpha_{i}$'s are positive integers and the fact that $\phi$ is multiplicative, it is clear that $\phi(n)= \phi(p_{1}^{\alpha_{1}})\phi(p_{2}^{\alpha_{2}})\dots \phi(p_{k}^{\alpha_{k}})$. It is therefore useful to know the value of $\phi$ for prime powers. It is easy to see that \[\phi(p^{\alpha})= p^{\alpha -1}(p-1),\] where $p$ is a prime and $\alpha$ is a positive integer. \\

We now state the results we shall use to prove the main theorems in this paper. We refer the reader \cite{Sha} and \cite{Bur} for a more detailed account of these results.

\begin{theorem}[Shapiro]\label{shapiro} Let $m\in \mathbb{N}$ be such that $m\not \in \{1, 2, 3, 4, 6, 10, 12, 18, 30\}$ and $\alpha= \frac{log2}{log3}$. Then $\phi(m)>m^{\alpha}$.
\end{theorem}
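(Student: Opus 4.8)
The plan is to introduce the auxiliary function $f(n)=\phi(n)/n^{\alpha}$ and reformulate the claim as the assertion that $f(m)>1$ for every $m$ outside the listed exceptional set. Since both $\phi$ and $n\mapsto n^{\alpha}$ are multiplicative, $f$ is multiplicative, so it suffices to understand $f$ on prime powers and then multiply. A direct computation gives $f(p^{a})=\phi(p^{a})/p^{a\alpha}=(p-1)\,p^{a(1-\alpha)-1}$, and since $1-\alpha>0$ this is strictly increasing in $a$ for each fixed prime $p$; thus for each prime the smallest factor occurs at $a=1$, where $f(p)=(p-1)/p^{\alpha}$.

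The decisive feature is that $\alpha=\log 2/\log 3$ is exactly the exponent for which $3^{\alpha}=2$, so that $f(3)=2/3^{\alpha}=1$. I would next record the short list of prime-power values that can fail to exceed $1$: one checks that $f(2)=2^{-\alpha}<1$ and $f(4)=2^{1-2\alpha}<1$, that $f(3^{b})\ge f(3)=1$ with strict inequality once $b\ge2$, and that every remaining prime-power factor is strictly larger than $1$ --- namely $f(2^{a})>1$ for $a\ge3$ (since $f(8)=2^{2-3\alpha}>1$ and $f$ is increasing in the exponent) and $f(p^{a})>1$ for all primes $p\ge5$ (it suffices to check $f(5)=4/5^{\alpha}>1$ and use monotonicity in both $p$ and $a$). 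Writing $m=2^{a}3^{b}n$ with $\gcd(n,6)=1$, we then have $f(m)=f(2^{a})f(3^{b})f(n)$, where only the factors $f(2)$ and $f(4)$ can pull the product below $1$.

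From here I would run a finite case analysis on $a=v_{2}(m)$. When $a\ge3$ the factor $f(2^{a})>1$ already forces $f(m)>1$; when $m$ is odd, $f(m)=f(3^{b})f(n)\ge1$ with equality only for $m\in\{1,3\}$. The remaining cases $a=1$ and $a=2$ are where the argument is genuinely delicate: here the deficiency of $f(2)$ or $f(4)$ must be compensated by $f(3^{b})f(n)$, and the comparison sits close to the threshold. The main obstacle is precisely this boundary analysis --- one must pin down $f(5)=4/5^{\alpha}$ and the small products $f(2)f(5)$, $f(2)f(9)$, $f(2)f(3)f(5)$, and $f(4)f(3)$ numerically rather than by crude estimates, since these land just below $1$ and produce exactly the exceptional values $2,6,10,12,18,30$; every other combination is pushed above $1$ by an extra factor of $5$ (or a larger prime), by $8\mid m$, or by $9\mid m$. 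To finish, I would verify directly that each of the nine listed integers satisfies $f(m)\le1$, confirming that the exceptional set is exactly as stated and cannot be shrunk.
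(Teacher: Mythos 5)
A preliminary point: the paper itself contains \emph{no proof} of this statement. It is Shapiro's theorem, imported from the literature (\cite{Sha}; see also \cite{Bur}) and used as a black box in the proofs of Lemma~\ref{estimate for S1} and Lemma~\ref{estimate for S2}. So there is no in-paper argument to compare yours against; your proposal has to be judged as a free-standing reconstruction of the cited result.

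Judged that way, your outline is correct and is the natural elementary route. The function $f(n)=\phi(n)/n^{\alpha}$ is multiplicative; $f(p^{a})=(p-1)p^{a(1-\alpha)-1}$ is increasing in $a$ since $\alpha<1$; the choice $\alpha=\log 2/\log 3$ gives $f(3)=1$ exactly; and your inventory of deficient prime powers is right: $f(2)=2^{-\alpha}<1$, $f(4)=2^{1-2\alpha}<1$ (because $\alpha>\tfrac12$), $f(8)=2^{2-3\alpha}>1$ (because $\alpha<\tfrac23$), $f(3^{b})>1$ for $b\geq 2$, and $f(p^{a})\geq f(5)=4/5^{\alpha}>1$ for all $p\geq 5$ by monotonicity in $p$ and $a$. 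The case split on $v_{2}(m)$ then isolates the right threshold comparisons: for $m\equiv 2\ (\mathrm{mod}\ 4)$ one must check whether $f(3^{b})f(n)>2^{\alpha}\approx 1.549$, which fails exactly for $m\in\{2,6,10,18,30\}$ (here $f(5)\approx 1.449$ and $f(9)=1.5$ sit just below the bar, while $f(7)\approx 1.76$, $f(27)=2.25$, $f(25)\approx 2.62$ and $f(9)f(5)\approx 2.17$ all clear it), and for $v_{2}(m)=2$ the bar is $2^{2\alpha-1}\approx 1.199$, which only $m\in\{4,12\}$ fail. Two caveats. First, what you wrote is a plan rather than a proof: the finite threshold verifications just listed are announced (``I would verify'') rather than performed, and they are the entire substance of the theorem, so a finished write-up must actually display them. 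Second, your summary mechanism (``every other combination is pushed above $1$ by an extra factor of $5$, by $8\mid m$, or by $9\mid m$'') is loosely phrased: divisibility by $9$ does not by itself rescue $m$ (witness $m=18$); what rescues, say, $m=54$ is $27\mid m$, and the clean criterion is the threshold comparison above, not a divisibility condition. Neither caveat is a logical gap --- every numerical claim you make is true, and the argument goes through once the deferred checks are written out.
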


\begin{theorem}[B\"{u}rgisser]\label{burgisser} Let $\displaystyle m= p_{1}^{\alpha_{1}}\dots p_{k}^{\alpha_{k}}$, where the primes $p_{i}$ satisfy $p_{i}<p_{i+1}$ for $1\leq i < k$ and where $\alpha_{i}\geq 1$ for $1\leq i \leq k$. There exists a matrix $A\in \Sp(2g, \mathbb{Z})$ of order $m$ if and only if\\

\begin{enumerate}
\item[a)] $\displaystyle \sum_{i=2}^{k}\phi(p_{i}^{\alpha_{i}})\leq 2g$, if $m\equiv 2(mod 4)$.
\item[b)] $\displaystyle \sum_{i=1}^{k}\phi(p_{i}^{\alpha_{i}})\leq 2g$, if $m\not \equiv 2(mod 4)$.
\end{enumerate}
\end{theorem}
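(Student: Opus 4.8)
The plan is to study $A$ through its action on the lattice $\mathbb{Z}^{2g}$ and to reduce the whole problem to bookkeeping with cyclotomic polynomials. Since $A^{m}=I$, the minimal polynomial of $A$ divides $x^{m}-1$, which is separable in characteristic zero; hence $A$ is semisimple over $\mathbb{C}$ with eigenvalues that are $m$-th roots of unity, and its characteristic polynomial factors as $\chi_{A}(x)=\prod_{d}\Phi_{d}(x)^{e_{d}}$, where $\Phi_{d}$ is the $d$-th cyclotomic polynomial, each $d\mid m$, and $\operatorname{lcm}\{d:e_{d}>0\}=m$ precisely because the order of $A$ is exactly $m$. As $\deg\Phi_{d}=\phi(d)$, comparing degrees gives the dimension identity $\sum_{d}e_{d}\,\phi(d)=2g$. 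The theorem then becomes the question of which multisets of cyclotomic indices can be realized, subject to the symplectic constraint.

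For necessity I would first record the structural consequences of the symplectic form. Since $\langle Av,Aw\rangle=\langle v,w\rangle$, eigenvectors for $\lambda$ and $\mu$ are orthogonal unless $\lambda\mu=1$; in particular the form restricts nondegenerately to the $(+1)$- and $(-1)$-eigenspaces, so each of these has even dimension. Next, because $\operatorname{lcm}\{d\}=m$, for each prime power $p_{i}^{\alpha_{i}}$ exactly dividing $m$ some index $d$ with $p_{i}^{\alpha_{i}}\mid d$ must occur, and from $p_{i}^{\alpha_{i}}\mid d$ together with multiplicativity one gets $\phi(p_{i}^{\alpha_{i}})\mid\phi(d)$, hence $\phi(p_{i}^{\alpha_{i}})\le\phi(d)$. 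Merging several prime powers into a single index only raises the degree, using $\phi(ab)\ge\phi(a)+\phi(b)$ for coprime $a,b$ with $\phi(a),\phi(b)\ge2$; summing the resulting contributions yields $\sum_{i}\phi(p_{i}^{\alpha_{i}})\le 2g$ in case (b), i.e. when $m$ is odd or $4\mid m$. The one delicate point is $m\equiv 2\pmod 4$: here $p_{1}=2$, $\alpha_{1}=1$, and the eigenvalue $-1$ can be produced by negating a matrix whose eigenvalues are the remaining odd-order roots of unity, so the term $\phi(2)=1$ is absorbed rather than charged, which is exactly why the sum in (a) begins at $i=2$.

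For sufficiency I would build $A$ blockwise. The model block is the ring of integers $\mathbb{Z}[\zeta_{d}]$, a free $\mathbb{Z}$-module of rank $\phi(d)$ on which multiplication by $\zeta_{d}$ is an automorphism of order exactly $d$ (its matrix is the companion matrix of $\Phi_{d}$). To make this symplectic I would equip it with the alternating pairing
\[
\langle x, y\rangle \;=\; \operatorname{Tr}_{\mathbb{Q}(\zeta_{d})/\mathbb{Q}}\bigl(\delta\, x\, \bar{y}\bigr)
\]
for a purely imaginary $\delta$ (so $\bar{\delta}=-\delta$, which forces $\langle x,x\rangle=0$); since $\zeta_{d}\bar{\zeta}_{d}=1$, multiplication by $\zeta_{d}$ preserves the pairing. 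Taking one such block for each relevant odd prime power $p_{i}^{\alpha_{i}}$, forming the symplectic-orthogonal direct sum, and padding with an identity block to reach dimension $2g$ (the leftover dimension is even, since $2g$ and every $\phi(d)$ with $d\ge3$ are even) produces a symplectic automorphism of order $\operatorname{lcm}(p_{i}^{\alpha_{i}})$. In the case $m\equiv2\pmod4$ I would negate the whole matrix at the end, raising the order to $2\operatorname{lcm}=m$ at no dimensional cost and keeping the $(-1)$-eigenspace even-dimensional.

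The main obstacle I anticipate is integral rather than rational. To land in $\Sp(2g,\mathbb{Z})$ with respect to the \emph{standard} form $J$, the invariant pairing built on each lattice $\mathbb{Z}[\zeta_{d}]$ must be arranged to be unimodular, so that the symplectic lattice is isometric to the standard one and a change of basis conjugates the block matrices into genuine integral symplectic matrices. Controlling the discriminant of this trace form, by relating $\delta$ to the different of $\mathbb{Z}[\zeta_{d}]$, is the delicate step; the remaining content is the degree bookkeeping and the parity analysis of the $\pm1$-eigenspaces together with the special role of $2\,\|\,m$, which are the places one must argue rather than compute mechanically.
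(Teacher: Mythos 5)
The paper contains no proof of this statement to compare yours against: Theorem~\ref{burgisser} is quoted verbatim from B\"urgisser \cite{Bur} and used as a black box in Section~\ref{main theorem}. So your proposal can only be judged on its own merits, and on those merits it is a correct skeleton of the standard argument. For necessity, the chain semisimplicity $\Rightarrow$ $\chi_{A}=\prod_{d}\Phi_{d}^{e_{d}}$ with $\operatorname{lcm}\{d: e_{d}>0\}=m$ $\Rightarrow$ $\sum_{d}e_{d}\phi(d)=2g$, combined with $\phi(p_{i}^{\alpha_{i}})\mid\phi(d)$ whenever $p_{i}^{\alpha_{i}}\mid d$ and the product-versus-sum inequality $\phi(a)\phi(b)\ge\phi(a)+\phi(b)$ when both factors are $\ge 2$, is sound, and it correctly isolates $\phi(2)=1$ as the unique obstruction, which is exactly what creates the case $m\equiv 2\pmod 4$. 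Note that this half never uses the symplectic form at all (it is really a statement about $\GL(2g,\mathbb{Q})$), so your observations about the parity of the $(\pm 1)$-eigenspaces, while true, are not needed.

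For sufficiency, the step you flag as delicate does go through, and it is worth recording why. For $q=p^{\alpha}$ an odd prime power, writing $\zeta=\zeta_{q}$ and $\eta=\zeta^{p^{\alpha-1}}$ (a primitive $p$-th root of unity), one has $\Phi_{q}'(\zeta)=p^{\alpha}\zeta^{-1}/(\eta-1)$, so $\delta=(\eta-\eta^{-1})/p^{\alpha}$ is purely imaginary and generates the inverse different, because $\eta^{-1}$ and $\eta+1$ are units; for $q=2^{\alpha}$ with $\alpha\ge 2$ the different is $(2^{\alpha-1})$ and $\delta=\zeta_{4}/2^{\alpha-1}$ works, since $\overline{\zeta_{4}}=-\zeta_{4}$. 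With such a $\delta$, the form $\operatorname{Tr}(\delta x\bar{y})$ on $\mathbb{Z}[\zeta_{q}]$ is alternating, $\zeta_{q}$-invariant and unimodular, and since every unimodular alternating lattice over $\mathbb{Z}$ admits a symplectic basis, each block genuinely lands in $\Sp(\phi(q),\mathbb{Z})$. Your blockwise assembly (symplectically orthogonal sum, identity padding of even rank, global negation when $2$ exactly divides $m$) then produces an element of order exactly $m$. What remains in your write-up is executional rather than conceptual: the grouping inequality in the necessity half and the different computation above would both need to be carried out in full.
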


\section{Main Results}\label{main theorem}
Throughout this section we take $m\in \mathbb{N}$ to be as in Theorem~\ref{burgisser}. We let $\mathcal{A}=\{m\in \mathbb{N} \mid p_{i}\leq 2g+1 \text{ for some } i\}$ and $\mathcal{B}=\mathbb{N}\setminus A$. Before we prove the main theorem, we record a lemma we need. 

\begin{lemma}\label{possible primes} Let $A\in G$ such that $o(A)=m$. Then $p_{i}\leq 2g+1, \forall i\in \{1, 2, \ldots ,k\}$.
\end{lemma}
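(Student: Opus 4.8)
The plan is to deduce this directly from B\"urgisser's criterion (Theorem~\ref{burgisser}). Since the hypothesis $o(A)=m$ means there is a matrix in $G$ of order exactly $m$, the ``only if'' direction of Theorem~\ref{burgisser} guarantees that the relevant numerical inequality holds. The essential elementary input is the bound $\phi(p_i^{\alpha_i}) = p_i^{\alpha_i-1}(p_i-1) \ge p_i - 1$, which is valid because $\alpha_i \ge 1$ forces $p_i^{\alpha_i-1}\ge 1$. Combining this with the observation that each (positive) summand in B\"urgisser's inequalities is bounded above by the whole sum, I expect to force $p_i - 1 \le 2g$ for every prime $p_i$ appearing in the relevant sum.

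Concretely, I would split into the two cases of Theorem~\ref{burgisser}. First suppose $m \not\equiv 2 \pmod 4$. Then part (b) gives $\sum_{i=1}^{k} \phi(p_i^{\alpha_i}) \le 2g$. For each fixed $i$, discarding the remaining positive terms yields $\phi(p_i^{\alpha_i}) \le 2g$, whence $p_i - 1 \le \phi(p_i^{\alpha_i}) \le 2g$, i.e. $p_i \le 2g+1$. This settles every prime in this case.

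Next suppose $m \equiv 2 \pmod 4$. Here $2 \mid m$ but $4 \nmid m$, so the smallest prime is $p_1 = 2$ with $\alpha_1 = 1$, and part (a) only constrains $\sum_{i=2}^{k} \phi(p_i^{\alpha_i}) \le 2g$. The argument of the previous paragraph applies verbatim to each index $i \ge 2$, giving $p_i \le 2g+1$ for all such $i$. The single prime excluded from the sum is $p_1 = 2$, and since $g \ge 1$ we trivially have $2 \le 2g+1$, so the bound holds for this prime as well.

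The argument is a short chain of inequalities, so I do not expect a serious obstacle. The only point requiring care is the case $m \equiv 2 \pmod 4$, where the prime $p_1 = 2$ is deliberately absent from B\"urgisser's sum and so must be handled separately by the trivial estimate $2 \le 2g+1$ rather than via the $\phi$-bound.
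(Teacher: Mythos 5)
Your proof is correct and takes essentially the same route as the paper: both deduce the bound from B\"urgisser's criterion via the estimate $\phi(p_i^{\alpha_i}) = p_i^{\alpha_i-1}(p_i-1)$. If anything you are more careful than the paper, which argues by contradiction and silently passes over the $m \equiv 2 \pmod 4$ case (implicitly, any prime exceeding $2g+1$ is odd and so does appear in the restricted sum), whereas you handle the excluded prime $p_1=2$ explicitly.
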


\begin{proof} Suppose $p_{i}>2g+1$ for some $i\in \{1, 2, \ldots, k\}$. This would imply that $\phi(p_{i}^{\alpha_{i}})=p_{i}^{\alpha_{i}-1}(p_{i}-1)>2g$ and we get a contradiction to Theorem~\ref{burgisser}. 
\end{proof}

\begin{theorem}\label{powers in G} Let $m\in \mathbb{N}$. Then $A^{m}=I$ if and only if $m\in \mathcal{A}$.
\end{theorem}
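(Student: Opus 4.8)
The plan is to prove the two implications separately, drawing the existence direction from B\"urgisser's criterion (Theorem~\ref{burgisser}) and the converse from Lemma~\ref{possible primes}. The interpretation I adopt, consistent with the abstract, is that the statement asserts the existence of a \emph{non-trivial} $A\in G$ with $A^{m}=I$. The single organizing observation is that such an $A$ exists precisely when $G$ contains an element whose order is some divisor $d>1$ of $m$, since $A^{m}=I$ is equivalent to $o(A)\mid m$.

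For the implication $m\in\mathcal{A}\Rightarrow$ existence, suppose $p_{i}\leq 2g+1$ for some $i$. I would produce an element of order exactly $p_{i}$; since $p_{i}\mid m$, any such $A$ satisfies $A^{m}=(A^{p_{i}})^{m/p_{i}}=I$ and $A\neq I$, as required. To guarantee an element of order $p_{i}$, I apply Theorem~\ref{burgisser} with $m$ replaced by the prime $p_{i}$. If $p_{i}$ is odd, then $p_{i}\not\equiv 2\pmod{4}$, so the relevant inequality is $\phi(p_{i})=p_{i}-1\leq 2g$, which holds because $p_{i}\leq 2g+1$. If $p_{i}=2$, then $2\equiv 2\pmod{4}$ and the sum appearing in condition (a) is empty, hence trivially at most $2g$. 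In either case B\"urgisser's criterion is met and the desired element exists.

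For the converse, suppose there is a non-trivial $A\in G$ with $A^{m}=I$, and set $d=o(A)$, so that $d>1$ and $d\mid m$. Every prime dividing $d$ also divides $m$, so it suffices to exhibit a prime divisor of $d$ that is at most $2g+1$. Applying Lemma~\ref{possible primes} to the element $A$ of order $d$ shows that every prime dividing $d$ is $\leq 2g+1$; since $d>1$ it admits at least one prime divisor, and this divisor witnesses $m\in\mathcal{A}$. This is exactly the contrapositive of the assertion that no non-trivial solution exists when all prime divisors of $m$ exceed $2g+1$.

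I expect no serious obstacle, since the substantive content is already supplied by Theorem~\ref{burgisser} and Lemma~\ref{possible primes}. The only points demanding care are bookkeeping ones in the existence direction: correctly splitting the cases $p_{i}=2$ and $p_{i}$ odd when determining the congruence class of the order modulo $4$, and emphasizing that an element of order $p_{i}$ (rather than of order exactly $m$) already solves $A^{m}=I$, so that the full strength of B\"urgisser's criterion for $m$ itself is not needed.
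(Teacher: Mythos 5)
Your proof is correct and follows essentially the same route as the paper: the forward direction produces an element of order $p_{i}$ via Theorem~\ref{burgisser} and raises it to the power $m/p_{i}$, and the converse applies Lemma~\ref{possible primes} to the order of $A$. Your treatment is in fact slightly more careful than the paper's, since you check which congruence case of B\"urgisser's criterion applies when $p_{i}=2$ and you make explicit the non-triviality hypothesis ($o(A)>1$) needed for the converse to yield a prime divisor.
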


\begin{proof} Suppose $m\in \mathcal{A}$. Choose $p_{i}$ such that $p_{i}\leq 2g+1$. Clearly, $\phi(p_{i})\leq 2g$ and it follows from theorem~\ref{burgisser} that we have $A\in G$ such that $o(A)=p_{i}$. Let $n=p_{1}^{\alpha_{1}}\dots p_{i}^{\alpha_{i}-1}\dots p_{k}^{\alpha_{k}}$. Now $A^{m}=(A^{p_{i}})^{n}=I$. \\

Suppose there exists $A\in G$ such that $A^{m}=I$. We show that $m\not \in \mathcal{B}$. Since $A^{m}=I$, it follows that $o(A)|m$. Let $o(A)=n=q_{1}^{r_{1}}\dots q_{\ell}^{r_{\ell}}$. By lemma~\ref{possible primes}, we know that each $q_{i}\leq 2g+1$. The result now follows from the following simple observation. Since $q_{i}|n$ and $n|m$, we have $q_{i}|m$ for each $i\in \{1, 2, \ldots, \ell\}$. This is indeed not possible if $m\in \mathcal{B}$.
\end{proof}

\subsection{An upper bound for the order}
We show that the maximal order in $G$ is always bounded. First, we introduce some notation.\\

Let $m\in \mathbb{N}$, be as in Theorem~\ref{shapiro} and let $n=p_{2}^{\alpha_{2}}\dots p_{k}^{\alpha_{k}}$. Suppose that there exists $A\in G$ such that $o(A)=m$. Consider the following sums: \[\displaystyle S_{1}= \sum_{i=1}^{k}\phi(p_{i}^{\alpha_{i}}) \quad
\text{ if } m\not \equiv 2(mod\, 4) \text{\quad  and } \]
\[\displaystyle S_{2}= \sum_{i=2}^{k}\phi(p_{i}^{\alpha_{i}}) \quad
\text{ if } m\equiv 2(mod\, 4).\]
\vspace{0.3 cm}\\
In the following lemmas, We show that $S_{1}$ and $S_{2}$ are bounded below.

\begin{lemma}\label{estimate for S1} $\displaystyle S_{1}=\sum_{i=1}^{k}\phi(p_{i}^{\alpha_{i}}) > m^{\frac{\alpha}{(g+1)}}$.
\end{lemma}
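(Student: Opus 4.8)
The plan is to relate the \emph{sum} $S_{1}=\sum_{i=1}^{k}\phi(p_{i}^{\alpha_{i}})$ to the \emph{product} $\prod_{i=1}^{k}\phi(p_{i}^{\alpha_{i}})=\phi(m)$, and then invoke Shapiro's bound $\phi(m)>m^{\alpha}$ (Theorem~\ref{shapiro}), which applies precisely because $m$ is assumed to avoid the exceptional set $\{1,2,3,4,6,10,12,18,30\}$. The elementary but crucial observation is that each summand $\phi(p_{i}^{\alpha_{i}})$ is at most the whole sum $S_{1}$, so that $\prod_{i=1}^{k}\phi(p_{i}^{\alpha_{i}})\le S_{1}^{k}$. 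Hence, if I can show $k\le g+1$, then (using $S_{1}\ge 1$) I obtain $S_{1}^{g+1}\ge S_{1}^{k}\ge \prod_{i=1}^{k}\phi(p_{i}^{\alpha_{i}})=\phi(m)>m^{\alpha}$, and taking positive $(g+1)$-th roots gives the claim.

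First I would record the bound $S_{1}\le 2g$, which is exactly part (b) of B\"urgisser's theorem (Theorem~\ref{burgisser}) applied to the element $A$ of order $m$ in the case $m\not\equiv 2\pmod 4$. Next I would bound the number $k$ of distinct prime divisors. Since $m\not\equiv 2\pmod 4$, either $m$ is odd, in which case every $p_{i}$ is odd and $\phi(p_{i}^{\alpha_{i}})=p_{i}^{\alpha_{i}-1}(p_{i}-1)\ge 2$, or $4\mid m$, in which case $p_{1}=2$ with $\alpha_{1}\ge 2$ so that $\phi(2^{\alpha_{1}})=2^{\alpha_{1}-1}\ge 2$ while the remaining factors are again $\ge 2$. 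In every case each summand satisfies $\phi(p_{i}^{\alpha_{i}})\ge 2$, so $2k\le \sum_{i=1}^{k}\phi(p_{i}^{\alpha_{i}})=S_{1}\le 2g$, giving $k\le g\le g+1$.

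With $k\le g+1$ in hand, I would assemble the chain $S_{1}^{g+1}\ge S_{1}^{k}\ge \prod_{i=1}^{k}\phi(p_{i}^{\alpha_{i}})=\phi(m)>m^{\alpha}$, where the middle equality is the multiplicativity of $\phi$ and the final strict inequality is Shapiro's theorem. Taking the positive $(g+1)$-th root then yields $S_{1}>m^{\alpha/(g+1)}$, as desired. Note that $S_{1}\ge 2>1$ (since $k\ge 1$ and each factor is $\ge 2$), so the step $S_{1}^{g+1}\ge S_{1}^{k}$ is legitimate and the strictness of the conclusion is inherited entirely from Shapiro's inequality.

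The main obstacle is genuinely modest: it is the case analysis guaranteeing that every factor $\phi(p_{i}^{\alpha_{i}})$ is at least $2$, which controls $k$. This is exactly where the hypothesis $m\not\equiv 2\pmod 4$ is essential, since it rules out the troublesome factor $\phi(2)=1$ that would otherwise weaken the bound on the number of primes and break the argument; this is also the reason the $m\equiv 2\pmod 4$ case must be treated separately (via $S_{2}$) in the companion lemma.
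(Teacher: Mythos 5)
Your proof is correct, and it shares the paper's skeleton --- compare the sum $S_{1}$ with the product $\prod_{i=1}^{k}\phi(p_{i}^{\alpha_{i}})=\phi(m)$ and let Shapiro's bound $\phi(m)>m^{\alpha}$ do the heavy lifting --- but both intermediate steps are executed differently. To bound $k$, the paper argues by prime counting: Lemma~\ref{possible primes} gives $p_{i}\leq 2g+1$ for every $i$, and there are at most $g+1$ primes in $\{2,3,\dots,2g+1\}$ (the prime $2$ together with at most $g$ odd primes), hence $k\leq g+1$ with no case analysis and no appeal to B\"urgisser's numerical inequality. You instead combine B\"urgisser's bound $S_{1}\leq 2g$ with the observation that $m\not\equiv 2\pmod 4$ forces every summand $\phi(p_{i}^{\alpha_{i}})\geq 2$; this costs a short case analysis but yields the sharper bound $k\leq g$, which would in fact prove the stronger inequality $S_{1}>m^{\alpha/g}$. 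For the sum--product comparison, the paper uses the AM--GM inequality, $S_{1}\geq k\,\phi(m)^{1/k}$, whereas you use the cruder estimate $\phi(m)\leq S_{1}^{k}$ (each factor is at most the sum); yours is formally weaker --- AM--GM gives $\phi(m)\leq (S_{1}/k)^{k}$ --- but entirely sufficient here, since Shapiro's inequality carries the conclusion either way. Both routes are valid and of comparable length; yours trades a little extra bookkeeping for a slightly stronger bound on $k$, and, like the paper's, it adapts immediately to the companion Lemma~\ref{estimate for S2} by applying the same argument to the odd part $n$.
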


\begin{proof} We know that $p_{i}\leq 2g+1, 1\leq i\leq k$. From this it follows that $k\leq g+1$. Now consider the sum $S_{1}$. We have
\begin{align*}
\phi(p_{1}^{\alpha_{1}})+ \dots + \phi(p_{k}^{\alpha_{k}}) & \geq k(\phi(p_{1}^{\alpha_{1}})\dots\phi(p_{k}^{\alpha_{k}}))^{\frac{1}{k}}\\
&= k(\phi(m))^{\frac{1}{k}}\\
&> m^{\frac{\alpha}{k}}\\
&\geq m^{\frac{\alpha}{(g+1)}}.
\end{align*}
\end{proof}

\begin{lemma}\label{estimate for S2} $\displaystyle S_{2}=\sum_{i=2}^{k}\phi(p_{i}^{\alpha_{i}}) > n^{\frac{\alpha}{g}}$.
\end{lemma}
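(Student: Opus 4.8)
The plan is to mirror the argument of Lemma~\ref{estimate for S1}, adapting it to the congruence condition $m\equiv 2\pmod 4$ and to the truncated sum $S_2$. First I would record what the hypothesis $m\equiv 2\pmod 4$ forces: the smallest prime is $p_1=2$ with $\alpha_1=1$, so that $n=p_2^{\alpha_2}\cdots p_k^{\alpha_k}=m/2$ is odd. Since $\gcd(2,n)=1$ and $\phi(2)=1$, the multiplicativity of $\phi$ gives $\phi(m)=\phi(2)\phi(n)=\phi(n)$. This identity is precisely what transfers Shapiro's lower bound on $\phi(m)$ to $\phi(n)$.

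Next I would bound the number of terms in $S_2$. The primes $p_2,\dots,p_k$ are the \emph{odd} primes dividing $m$, and by Lemma~\ref{possible primes} each satisfies $p_i\le 2g+1$. The odd primes up to $2g+1$ all lie among the $g$ odd integers $3,5,\dots,2g+1$, so there are at most $g$ of them; hence $k-1\le g$. Moreover $k-1\ge 1$, since $m\ne 2$ (the value $2$ is among the exceptions excluded in Theorem~\ref{shapiro}), so $n>1$.

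With these preliminaries in place the estimate follows as before. Applying the AM--GM inequality to the $k-1$ positive numbers $\phi(p_2^{\alpha_2}),\dots,\phi(p_k^{\alpha_k})$ yields
\[
S_2=\sum_{i=2}^{k}\phi(p_i^{\alpha_i})\ge (k-1)\Big(\prod_{i=2}^{k}\phi(p_i^{\alpha_i})\Big)^{\frac{1}{k-1}}=(k-1)\big(\phi(n)\big)^{\frac{1}{k-1}}.
\]
By Theorem~\ref{shapiro} together with the identity above, $\phi(n)=\phi(m)>m^{\alpha}=(2n)^{\alpha}>n^{\alpha}$, so $(\phi(n))^{1/(k-1)}>n^{\alpha/(k-1)}$. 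Finally, using $1\le k-1\le g$ and $n>1$, I would conclude
\[
S_2>(k-1)\,n^{\frac{\alpha}{k-1}}\ge n^{\frac{\alpha}{g}},
\]
where the strict middle inequality carries through the whole chain.

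The computation itself is routine once the setup is fixed; the only step requiring care is the bookkeeping that separates this lemma from Lemma~\ref{estimate for S1}. The hypothesis $m\equiv 2\pmod 4$ must be used twice---once to drop the prime $2$ from the sum, and once, through $\phi(m)=\phi(n)$, to feed Shapiro's inequality into the estimate---and one must correctly count that there are at most $g$ (rather than $g+1$) odd primes below $2g+1$, which is exactly what replaces the exponent $g+1$ by $g$ in the final bound. The apparent edge case $m=2$ is harmless, being excluded by the hypothesis of Theorem~\ref{shapiro}.
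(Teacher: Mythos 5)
Your proof is correct and takes essentially the same approach as the paper: AM--GM applied to the $k-1$ terms $\phi(p_i^{\alpha_i})$, combined with the count $k-1\le g$ of odd primes up to $2g+1$. The only difference is cosmetic: the paper notes that $n$ itself avoids the exceptional set of Theorem~\ref{shapiro} and applies Shapiro's inequality directly to $n$, whereas you apply it to $m$ and transfer the bound via $\phi(m)=\phi(2)\phi(n)=\phi(n)$; both routes yield $\phi(n)>n^{\alpha}$, after which the arguments coincide.
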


\begin{proof} Since $m\equiv 2(mod\, 4)$, it follows that $m=2n=2(p_{2}^{\alpha_{2}}\dots p_{k}^{\alpha_{k}})$. Clearly, $n\not \in \{1, 2, 3, 4, 6, 10, 12, 18, 30\}$ and the inequality in theorem~\ref{shapiro} applies. Applying a similar argument as in lemma~\ref{estimate for S1} to $n$ gives us the desired lower bound for $S_{2}$.
\end{proof}

\begin{theorem}\label{bound on the order} Let $A \in G$ be such that $o(A)=m.$ Then $m\leq max\{30, M\}$ where $M=max\{2(2g)^{\frac{g}{\alpha}}, (2g)^{\frac{(g+1)}{\alpha}} \}$ with $\alpha=\frac{log2}{log3}$.\\
\end{theorem}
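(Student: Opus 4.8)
The plan is to split into cases according to whether $m$ lies in the Shapiro exceptional set $\{1,2,3,4,6,10,12,18,30\}$ and, when it does not, according to the congruence class of $m$ modulo $4$, so that the two lemmas just proved can be fed directly into B\"urgisser's bound. The entire analytic content has already been packaged into Theorem~\ref{shapiro} and Lemmas~\ref{estimate for S1} and~\ref{estimate for S2}, so the proof of the theorem itself should amount to combining these with Theorem~\ref{burgisser}.

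First I would dispose of the trivial case: if $m \in \{1,2,3,4,6,10,12,18,30\}$, then $m \le 30 \le \max\{30,M\}$ and there is nothing to prove. For the remainder I may therefore assume $m$ is as in Theorem~\ref{shapiro}, i.e. that $\phi(m) > m^{\alpha}$ holds, which is precisely the hypothesis under which the two lemmas were established.

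Next I would treat the case $m \not\equiv 2 \pmod 4$. Since $o(A)=m$, part (b) of Theorem~\ref{burgisser} gives $S_{1}=\sum_{i=1}^{k}\phi(p_{i}^{\alpha_{i}}) \le 2g$. Combining this with the lower bound $S_{1} > m^{\alpha/(g+1)}$ from Lemma~\ref{estimate for S1} yields $m^{\alpha/(g+1)} < 2g$, and solving for $m$ gives $m < (2g)^{(g+1)/\alpha} \le M$. For the complementary case $m \equiv 2 \pmod 4$, I would write $m = 2n$ with $n = p_{2}^{\alpha_{2}}\cdots p_{k}^{\alpha_{k}}$; then part (a) of Theorem~\ref{burgisser} gives $S_{2}=\sum_{i=2}^{k}\phi(p_{i}^{\alpha_{i}}) \le 2g$, and pairing this with $S_{2} > n^{\alpha/g}$ from Lemma~\ref{estimate for S2} gives $n^{\alpha/g} < 2g$, hence $n < (2g)^{g/\alpha}$ and $m = 2n < 2(2g)^{g/\alpha} \le M$. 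In either branch $m \le \max\{30,M\}$, as claimed.

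The work here is essentially bookkeeping, and the only point requiring genuine care is ensuring that Shapiro's hypothesis is legitimately in force in each branch. In the second branch this is slightly delicate because Lemma~\ref{estimate for S2} applies the inequality to $n$ rather than to $m$; I would note that $n$ is odd, so the only way $n$ could land in the exceptional set is $n \in \{1,3\}$, giving $m \in \{2,6\}$, both of which are already excluded by the standing assumption that $m$ is as in Theorem~\ref{shapiro}. This justifies passing from the estimate on $S_{2}$ to the bound on $n$, after which the final inequality is immediate, and this is the step I expect to be the main (if modest) obstacle.
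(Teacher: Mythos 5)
Your proof is correct and takes essentially the same route as the paper's: the identical case split on $m \bmod 4$, feeding Lemma~\ref{estimate for S1} and Lemma~\ref{estimate for S2} into the bound $\le 2g$ from Theorem~\ref{burgisser}. The two points you flag explicitly (disposing of the Shapiro exceptional set via the constant $30$, and checking that Shapiro's inequality legitimately applies to the odd part $n$ when $m \equiv 2 \pmod 4$) are handled in the paper implicitly and inside the proof of Lemma~\ref{estimate for S2}, respectively, so your write-up is just a more careful rendering of the same argument.
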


\begin{proof} Suppose that $m \not \equiv 2(mod\, 4)$. By lemma~\ref{estimate for S1}, we have $S_{1}> m^{\frac{\alpha}{(g+1)}}$. If $m>(2g)^{\frac{(g+1)}{\alpha}}$, then we have $S_{1}>2g$. This is clearly not possible. Thus it follows that $m\leq (2g)^{\frac{(g+1)}{\alpha}}$. \\

Similarly, we see that if $m\equiv 2(mod \, 4)$, then lemma~\ref{estimate for S2} applies and we have $S_{2}>n^{\frac{\alpha}{g}}$. If $m>2(2g)^{\frac{g}{\alpha}}$, then $S_{2}>2g$. As this is not possible, it follows that $m\leq 2(2g)^{\frac{\alpha}{g}}$. \\

Taking $M=max\{2(2g)^{\frac{g}{\alpha}}, (2g)^{\frac{(g+1)}{\alpha}} \}$, we obtain $m\leq max\{30, M\}$.
\end{proof}

\section{Finite order elements in $\Sp(4, \mathbb{Z})$}\label{computations}

Using Lemma~\ref{possible primes}, Theorem~\ref{bound on the order} and Theorem~\ref{burgisser}, it is easy to see that the possible orders of elements in $\Sp(4,\mathbb{Z})$ are precisely $2, 3, 4, 5, 6, 8, 10$ and $12$. Since this is computational, we leave the details to the reader. In this section, we explicitly identify matrices in $\Sp(4, \mathbb{Z})$ of these orders. The main tool we use is Bender's presentation of $\Sp(4, \mathbb{Z})$. Throughout this section we use the same notation as in \cite{Ben} in all our computations. \\

In \cite{Ben}, Bender gives a presentation of $\Sp(4,\mathbb{Z})$ using two generators and eight defining relations. We recall his result below.

\begin{theorem}[Bender]\label{bender} The group $\Sp(4, \mathbb{Z})$ is generated by the two elements
\[K= \begin{bmatrix}1 & 0 & 0 & 0\\ 1 & -1 & 0 & 0 \\ 0 & 0 & 1 & 1 \\ 0 & 0 & 0 & -1 \end{bmatrix}, \quad L= \begin{bmatrix}0 & 0 & -1 & 0 \\ 0 & 0 & 0 & -1\\ 1 & 0 & 1 & 0\\ 0 & 1 & 0 & 0  \end{bmatrix}\]
subject to the following eight relations:\\

\begin{enumerate} \item[a)] $K^{2}=I$,
\item[b)] $L^{12}=I$,
\item[c)] $(KL^{7}KL^{5}K)L=L(KL^{5}KL^{7}K)$,
\item[d)] $(L^{2}KL^{4})(KL^{5}KL^{7}K)= (KL^{5}KL^{7}K)(L^{2}KL^{4}),$
\item[e)] $(L^{3}KL^{3})(KL^{5}KL^{7}K)= (KL^{5}KL^{7}K)(L^{3}KL^{3}),$
\item[f)] $(L^{2}(KL^{5}KL^{7}K))^{2}= ((KL^{5}KL^{7}K)L^{2})^{2},$
\item[g)] $L(L^{6}(KL^{5}KL^{7}K))^{2}= (L^{6}(KL^{5}KL^{7}K))^{2}L,$
\item[h)] $(KL^{5})^{5}= (L^{6}(KL^{5}KL^{7}K))^{2}$.
\end{enumerate}
\end{theorem}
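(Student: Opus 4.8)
The plan is to split the argument into the two standard halves of any presentation proof: first, that the displayed integer matrices $K$ and $L$ lie in $\Sp(4,\mathbb{Z})$, generate it, and satisfy all eight relations --- which produces a surjection from the abstractly presented group onto $\Sp(4,\mathbb{Z})$ --- and second, that these relations already suffice to define the group, i.e. that this surjection is injective. Let $\Gamma$ denote the abstract group on generators $K, L$ subject to relations a)--h), and let $\pi\colon \Gamma \to \Sp(4,\mathbb{Z})$ be the homomorphism sending each abstract generator to the corresponding matrix. First I would verify directly that both matrices are symplectic by checking $K^{\top}JK = L^{\top}JL = J$, and then confirm relations a)--h) by explicit matrix multiplication in $\M(4,\mathbb{Z})$; relations a) and b) are immediate, and the other six are finite computations. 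This makes $\pi$ well defined.

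For surjectivity I would fix a known generating set of $\Sp(4,\mathbb{Z})$ --- for instance the elementary symplectic transvections, or the generators used by Hua--Reiner and Klingen --- and express each such generator explicitly as a word in $K$ and $L$. Since those elements are known to generate $\Sp(4,\mathbb{Z})$, this shows $\pi$ is onto.

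The substance of the theorem is the injectivity of $\pi$: relations a)--h) must impose no identifications beyond those already holding in $\Sp(4,\mathbb{Z})$. The natural route is to begin from an independently established presentation of $\Sp(4,\mathbb{Z})$ --- for example one read off the $2$-skeleton of a simply connected complex on which $\Sp(4,\mathbb{Z})$ acts, such as a deformation retract of the Siegel upper half space $\mathfrak{H}_2$, via the Macbeath--Brown method for presentations of groups acting on simply connected complexes --- and then to show, by Tietze transformations together with Reidemeister--Schreier rewriting, that this presentation is equivalent to the one on $K, L$ with relations a)--h). Because $\Sp(4,\mathbb{Z})$ is infinite, the sufficiency of the relations cannot be settled by a literal index count and must be organized as a rewriting or coset-enumeration argument relative to a suitable subgroup, say the image of a maximal torus or of a parabolic.

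The step I expect to be the main obstacle is precisely this verification that no further relations are needed. The eight relations are delicate commutation and order conditions among the words $KL^{5}KL^{7}K$ and $L^{6}(KL^{5}KL^{7}K)$ and their products, and the difficulty is the bookkeeping required to show that every relation holding between $K$ and $L$ in $\Sp(4,\mathbb{Z})$ is a consequence of a)--h). This is the point at which one must either exploit the topology of the quotient $\Sp(4,\mathbb{Z})\backslash\mathfrak{H}_2$ to control its fundamental group, or carry through a long but finite Reidemeister--Schreier computation; it is the part that no general argument shortens.
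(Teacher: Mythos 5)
First, a point of comparison: the paper does not prove this statement at all --- it is quoted verbatim from Bender \cite{Ben} as an external input to Section 4 --- so your proposal can only be measured against Bender's original argument, not against anything internal to this paper. Your first half is routine and correct: checking $K^{\top}JK = L^{\top}JL = J$, verifying relations a)--h) by finite matrix computations, and expressing a known generating set (Hua--Reiner or Klingen generators) as explicit words in $K$ and $L$ does yield a well-defined surjection $\pi\colon\Gamma\to\Sp(4,\mathbb{Z})$ from the abstractly presented group.

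The genuine gap is in the second half, which is the entire content of the theorem. You correctly identify injectivity of $\pi$ as the crux, but the proposal never carries it out --- ``carry through a long but finite Reidemeister--Schreier computation'' is a placeholder, not an argument --- and the specific mechanisms you name are partly misapplied. Coset enumeration in the Todd--Coxeter sense only terminates relative to a \emph{finite-index} subgroup, whereas the subgroups you propose (the image of a maximal torus, or of a parabolic) have infinite index in $\Sp(4,\mathbb{Z})$, so enumeration relative to them is not a meaningful procedure. Reidemeister--Schreier runs in the opposite direction: it extracts a presentation of a subgroup \emph{from} a known presentation of the ambient group; it cannot certify that a candidate presentation of the ambient group is complete. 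And the deformation retract of the Siegel upper half space $\mathfrak{H}_2$ that you invoke (a spine for $\Sp(4,\mathbb{Z})$, in the spirit of MacPherson--McConnell) was constructed decades after Bender's 1968 theorem and is not an off-the-shelf input to it. The workable route --- and the historically actual one --- is the one you mention only in passing: start from Klingen's finite presentation of $\Sp(4,\mathbb{Z})$, itself obtained from reduction theory for the action on $\mathfrak{H}_2$, and reduce it to the two-generator, eight-relation presentation by explicit Tietze transformations. That explicit reduction is the long computation that constitutes Bender's proof, and nothing in your proposal substitutes for it; as written, the proposal is a plausible roadmap with its destination missing.
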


Before we proceed further, we quickly recall some notation from \cite{Ben} that we frequently use. We write the exponent $m$ for the word $L^{m}, (m\in \mathbb{N})$ and $H$ for the word $KL^5KL^7K$. For example, in our new notation, the word $H=KL^5KL^7K$ will be written as $H=K5K7K$. We also note that $H^2=I$ and let $w_{\alpha}=H6$, $w_{\beta}=9H6H$ and $x_{\alpha}=5K1$. \\

It is clear from the presentation that we have elements of orders $2, 3, 4, 6 $ and $12$ ($o(K)=2$, $o(L^{4})=3$, $o(L^{3})=4$, $o(L^{2})=6$ and $o(L)=12$).\\

Consider the word $K5$. We first show that $(K5)^{10}=I$. Using $(h)$ it follows that $(K5)^5 =(6H)^2$. It is enough to show that $(6H)^{4}=I$. Indeed,
\begin{align*}
(6H)^{4} &= (6H)^{2}(6H)^{2}\\
& \overset{(i)}= (H6)^{2}(H6)^{2}\\
&= H6(H6)^{2}H6\\
&\overset{(i)}= H6(6H)^{2}H6\\
&= H\underline{66}H6\underline{HH}6\\
&= I.
\end{align*}

Since $(K5)^{10}=I$, it follows that $o((K5)^2)$ is either $1$ or $5$. We will prove that $o((K5)^2)=5$ by showing that $(K5)^2\neq I$. Suppose $(K5)^2=I$. A simple computation shows that $K5=5K$ and we get a contradiction. Indeed,
\begin{align*}
K5 &\overset{(h)}= (6H)^{2}\\
& \overset{(i)}= (HK)^{3}\\
K5K &= (HK)^{3}K\\
&\overset{(i)}= K(HK)^{3}\\
&= K(K5)\\
&= 5\\
K5 &= 5K
\end{align*}

The result follows from the computation below.
\begin{equation}\label{eqn 1}
I= (K5)^{2}= (K5)(K5)= (K5)(5K)= K10K= 10.
\end{equation}

Since $(K5)^{10}=I$, it follows that $o(K5)$ is either $1, 2, 5$ or $10$. We will show that $o(K5)=10$. It is enough to show that $(K5)^{5}\neq I$ (we know that $K5\neq I$ and $(K5)^{2}\neq I$). Suppose $(K5)^{5}=I$. Using $(h)$ and $(i)$ it follows that \[(K5)^{5}=(6H)^{2}=(HK)^{3}=w_{\alpha}^{2}=I.\] Since $w_{\alpha}^{2}=1$, we see that
\begin{equation}\label{eqn 2}
6H=H6.
\end{equation}

Using ~\eqref{eqn 2}, we have
\begin{equation}\label{eqn 3}
w_{\beta}=9H\underline{6H}=9HH6=3.
\end{equation}

Before we proceed further, we do a computation which is essential. To be more precise, we show that
\begin{equation}\label{eqn 4}
3=w_{\beta}= 8K7KHK.
\end{equation}
Indeed,
\begin{align*}
3 &= w_{\beta} \\
&= \underline{9H6}H\\
&\overset{(k)}= (8K7\underline{K5K7})(\underline{K5K7}K)\\
&= 8K7(K5K7)^{2}K\\
&= 8K7(HK)^{2}K\\
&= 8K7KHK \quad [\text{since $(HK)^{3}=1$ by assumption}]\\
\end{align*}

From ~\eqref{eqn 4}, it follows that
\begin{equation}\label{eqn 5}
w_{\beta}^{2}=6=11K7KHK.
\end{equation}

We will use ~\eqref{eqn 5} to show that $H=K$ and ultimately $K5=5K$ giving us a contradiction. Before we continue we make the following observation. We show that \begin{equation}\label{eqn 6}
(KH)^{2}K=KHK.
\end{equation}

This follows from $(i)$ and the fact that $(HK)^{3}=I$. Indeed,
\begin{displaymath}
(KH)(KH)^{2}K = (KH)^{3}K \overset{(i)}= K(HK)^{3}=K.
\end{displaymath}

Consider ~\eqref{eqn 5}. We have
\begin{align*}
6 &= \underline{11K7}KHK\\
& \overset{(2)}= H6KHKHK\\
&= \underline{H6}\underline{(KH)^{2}K}\\
& \overset{~\eqref{eqn 6}}= 6HKHK\\
&= 6(HK)^{2}\\
&= 6KH
\end{align*}

The above computation shows that $KH=1$ and $H=K$. Since we have $H=K$, it follows that $K5K7=1$ and $K5=5K$. The result follows from ~\eqref{eqn 1}.\\

Consider the word $9H$. Clearly, $9H\neq I$. We will show that $(9H)^4 \neq I$ and $(9H)^{8}=I$. \\

Using $(9)$ in \cite{Ben}, we have
\begin{align*}
(9H)^{4} &= (w_{\alpha}w_{\beta})^{4}\\
&= (w_{\alpha}w_{\beta})(w_{\beta}w_{\alpha})^{2}(w_{\alpha}w_{\beta})\\
&= w_{\alpha}w_{\beta}^{2}w_{\alpha}w_{\beta}w_{\alpha}^{2}w_{\beta}\\
& \overset{(7)}= w_{\beta}^{3}\underline{w_{\alpha}^{2}w_{\beta}}\\
& \overset{(8)}= w_{\beta}^{3}w_{\beta}w_{\alpha}^{2}\\
& \overset{(10)}= w_{\alpha}^{2}.
\end{align*}

The result now follows from a simple observation. We have
\[(9H)^{4}=w_{\alpha}^{2}=(6H)^{2}\overset{(i)}= (H6)^{2} \overset{(h)}= (K5)^{5}\neq I\]
and \[(9H)^{8}=(K5)^{10}=I.\]

\section*{Acknowledgements}\label{gap}

We would like to thank Dr. Gururaja Upadhya for showing interest in this work. We would also like to thank him for his suggestions in improving the presentation of this paper.

\bibliographystyle{amsplain}
\bibliography{ref}

\providecommand{\bysame}{\leavevmode\hbox to3em{\hrulefill}\thinspace}
\providecommand{\MR}{\relax\ifhmode\unskip\space\fi MR }
\providecommand{\MRhref}[2]{%
  \href{http://www.ams.org/mathscinet-getitem?mr=#1}{#2}
}
\providecommand{\href}[2]{#2}
\begin{thebibliography}{1}

\bibitem{Ben}
Peter Bender, \emph{Eine {P}r\"asentation der symplektischen {G}ruppe {${\rm
  Sp}(4,\,{\bf Z})$}\ mit {$2$} {E}rzeugenden und {$8$} definierenden
  {R}elationen}, J. Algebra \textbf{65} (1980), no.~2, 328--331. \MR{585724
  (81j:20062)}

\bibitem{Bur}
B.~B{\"u}rgisser, \emph{Elements of finite order in symplectic groups}, Arch.
  Math. (Basel) \textbf{39} (1982), no.~6, 501--509. \MR{690470 (85b:20062)}

\bibitem{Burton}
David~M. Burton, \emph{Elementary number theory}, Allyn and Bacon Inc., Boston,
  Mass., 1980, Revised printing. \MR{567137 (81c:10001b)}

\bibitem{Sha}
Harold Shapiro, \emph{An arithmetic function arising from the {$\phi$}
  function}, Amer. Math. Monthly \textbf{50} (1943), 18--30. \MR{0007755
  (4,188c)}

\end{thebibliography}
\end{document}